\begin{document}
 \title{Analysis of Hierarchical ensemble Kalman Inversion}


          \author{Neil K. chada\thanks{Mathematics Institute, University of Warwick, Coventry, CV4 7AL, UK, (n.chada@warwick.ac.uk).}}

         \pagestyle{myheadings} \markboth{}{N. K. Chada} \maketitle

 \begin{abstract}
 We discuss properties of hierarchical Bayesian inversion through the ensemble Kalman filter (EnKF). Our focus will be primarily on deriving continuous-time limits for hierarchical  inversion in the linear case. An important characteristic of the EnKF for inverse problems is that the updated particles are preserved by the linear span of the initial ensemble. By incorporating certain hierarchical approaches we show that we can break away from the induced {subspace property}. We further consider a number of variants of the EnKF such as covariance inflation and localization, where we derive their continuous-time limits. We verify these results with various numerical experiments through a linear elliptic partial differential equation. 
\end{abstract}
\begin{keywords} 
Bayesian inverse problems, ensemble Kalman filter, hierarchical learning, \\  \ \ \ \ diffusion limits.
\end{keywords}
 \begin{AMS}
 65M32; 62M20; 35Q62
\end{AMS}

\section{Introduction}
The ensemble Kalman filter (EnKF) \cite{GE09,EL96} was proposed by Evensen in 1994 as a Monte-Carlo approximation of the Kalman filter (KF). Its motivation was based on mitigating the computational challenges associated with the KF, replacing the updated mean and covariances with an ensemble of particles. Since then the EnKF has been widely applied in numerous fields such as weather prediction and oceanography \cite{JLA02, GE94, LO05}. Given its robustness and Bayesian formulation paradigm, the EnKF has been further applied to inverse problems. 
Inverse problems are concerned with the recovery of some quantity of interest $u \in \mathcal{X}$ from noisy measurements $y \in \mathcal{Y}$ given by 
\begin{equation}
\label{eq:ip}
y=\mathcal{G}(u) + \eta, \ \ \ \eta \sim \mathcal{N}(0,\Gamma).
\end{equation}
By allowing for a Bayesian approach one is interested in constructing, via an application of an infinite-dimensional Bayes' Theorem  \cite{AMS10}, a posterior measure of the random variable $u|y$
\begin{equation*}
\mu(du) = \frac{1}{Z}\exp(-\Phi(u;y))\mu_0(du),
\end{equation*}
with normalizing constant
\begin{equation*}
Z:= \int_{\mathcal{X}}\exp(-\Phi(u;y))\mu_0(du),
\end{equation*}
such that our data-likelihood is in the form of a potential
\begin{equation*}
\Phi(u;y) = \frac{1}{2} \| \Gamma^{-1/2}(y - \mathcal{G}(u)) \|^{2},
\end{equation*}
with the addition of a prior measure $\mu_0$.
This has been recently studied where there have been advancements in both computational and theoretical understanding \cite{BSW17, MAI16, ILS13, SS17}. From the computational aspect the EnKF was derived as a derivative-free inverse solver, which can be thought of as an optimizer which uses techniques from the Levenberg-Marquardt (LM) scheme \cite{MH97} combined with elements of the EnKF. It has been shown that applying these regularization techniques from LM \cite{MAI16,MH97} can improve the performance of the method. Regarding the theory of the EnKF for inverse problems, there has been progress on gaining analytical insight such as approximating continuous-time limits \cite{BSW17, SS17} within the context of inverse problems.  A new direction in this field which has emerged is the incorporation of hierarchical approaches for inverse problems \cite{ABPS14, DIS16, LLM17,RGLM16, RHL14}. In hierarchical inverse problems we are interested in recovering our unknown and a corresponding  hyperparameter $\theta \in \mathbb{R}^{+}$ that defines the unknown i.e. we wish to recover an unknown $(u,\theta) \in \mathcal{X} \times \mathbb{R}^{+}$ from noisy measurements $y$ where
\begin{equation*}
y=\mathcal{G}(u, \theta) + \eta.
\end{equation*} 
This allows for richer reconstructions as more information about the underlying unknown is available. An important feature of the EnKF applied to inverse problems is that it produces an ensemble of particles which lies within the linear span of the initial ensemble. This effect is known as the ``subspace property". By incorporating various hierarchical approaches we look to break this subspace property. This allows the solution to learn from information which may not be given within the span, but instead the data.  Specifically for EnKF inversion a hierarchical methodology was proposed in \cite{CIRS17} which demonstrated improvements over its non-hierarchical counterpart. The newly proposed method provides a way to effectively learn both the unknown and its hyperparameters that define it. This work used ideas from hierarchical computational statistics and applied it in an inverse problem setting \cite{PRS03, PRS07}. 

However regarding analytical results there has been no development in understanding these hierarchical approaches for the EnKF. This can be related to the lack of analysis on the EnKF. As of yet there has been work done on estimating non-hierarchical continuous-time limits  \cite{GMT11}. The purpose of this work is to build some analytical insight for hierarchical approaches that were used in \cite{CIRS17} for Bayesian inverse problems. It was shown in the linear noise-free case that one can attain a preconditioned gradient flow structure. Much of this will be based on extending the current theory in a hierarchical manner to the nonlinear noisy case, while providing an overview of the limit results attained in \cite{CIRS17}. We also aim to understand these approaches with modified versions of the EnKF, namely localization \cite{HM04} and covariance inflation \cite{AA99}. Both these techniques were developed to improve errors based on a small ensemble size. Similarly with some of the hierarchical approaches, localization and covariance inflation have the ability to break the subspace property. As a result it would be of interest to understand the limiting behaviour of these techniques. This includes conducting numerical experiments to verify hierarchical results obtained. We emphasize that with this work, rather than deriving new results for the EnKF, we aim to shed some light on hierarchical EnKF approaches for inverse problems and their respective continuous-time limits.
\subsection{Structure}
 The layout of this work is as follows; in Section \ref{section-2} we provide an overview of the EnKF applied to inverse problems. This will lead onto the formal derivation of the continuous-time limits applied to inverse problems. In Section \ref{section-3} we give a brief introduction for hierarchical approaches to EnKF inversion, while in Section \ref{section-4} we derive and present continuous-time limits for a list of variants on the EnKF. We verify these results through means of numerics in Section \ref{section-5}. Finally in Section \ref{section-6} we summarize our results and provide a brief mention on future work to consider.
 \subsection{Notation}
We assume that $(\mathcal{X}, \| \cdot \|, \langle \cdot \rangle)$ and $(\mathcal{Y}, \| \cdot \|, \langle \cdot \rangle)$ are two separable Hilbert spaces which are linked through the forward operator $\mathcal{G}: \mathcal{X} \rightarrow \mathcal{Y}$. The operator can be thought of as mapping from the space of parameters $\mathcal{X}$ to the observation space $\mathcal{Y}$. We denote the space of our hyperparameters as $\theta = (\sigma,\alpha,\ell) \in \mathbb{H}$ where $\mathbb{H}:= \mathbb{R} \times \mathbb{R}^+ \times \mathbb{R}^{+}$. For any such operator we define $ \langle \cdot, \cdot \rangle_{\Gamma} = \langle \Gamma^{-1/2}\cdot, \Gamma^{-1/2}\cdot \rangle$ and $\| \cdot \|_{\Gamma} = \| \Gamma^{-1/2} \cdot \|$, while for finite dimensions $| \cdot |_{\Gamma} = | \Gamma^{-1/2} \cdot|$ with $|\cdot|$ denoting Euclidean norm. $u^{(j)}_n$ will denote an ensemble of particles where $n$ is the iteration count and $j \in \{1,\ldots,J\}$ is the $j^{\textrm{th}}$ ensemble member.

\section{EnKF for inverse problems}\label{section-2}
The iterative EnKF method was first proposed in \cite{ILS13} to tackle  Bayesian inverse problems in a partial different equation (PDE)-constrained framework. The method can be derived as a sequential Monte-Carlo (SMC) approximation, where our probability measures of interest $\mu_n$ are defined by, for $h=N^{-1}$,
\begin{equation*}
\mu_n(du) \propto \exp(-nh\Phi(u;y))\mu_0(du),
\end{equation*}
thus leading to
\begin{equation*}
\mu_{n+1}(du) = \frac{1}{Z_n} \exp(-h\Phi(u;y))\mu_n(du),
\end{equation*}
where
\begin{equation*}
Z_n:= \int_{\mathcal{X}}\exp(-h\Phi(u;y))\mu_n(du).
\end{equation*}
We can construct our update for our probability measures $\mu_{n+1}$ through the operation
\begin{equation}
\label{eq:AD}
\mu_{n+1} = L_n \mu_n,
\end{equation}
where $L_n$ can be treated as a non-linear operator from $\mu_n$ to $\mu_{n+1}$ via an application of Bayes' Theorem. The idea behind the formulation of \eqref{eq:AD} is that it can be viewed as an artificial discrete-time dynamical system mapping the prior measure $\mu_0$ to the posterior measure $\mu_n$. Recall that with SMC methods one is interested in approximating a sequence of particles  and weights which take the form
\begin{equation*}
\mu_{n} \simeq \sum^{J}_{j=1}w^{(j)}_{n}\delta_{u^{(j)}_{n}}, \ \ \  j \in \{1,\ldots,J\},
\end{equation*}
with $\delta_{u^{(j)}_{n}}$ denoting the delta-Dirac mass at $u^{(j)}_{n}$. The weights associated with our sequence of particles satisfy the condition 
\begin{equation*}
\sum^{J}_{j=1}w^{(j)}_{n} = 1.
\end{equation*}
The SMC approach poses computational advantages over other Monte-Carlo methods, but still has limitations within it. These arise when the weights $\{w^{(j)}_n\}^J_{j=1}$ become degenerate i.e. that one of the weights becomes close to one where the rest are negligible \cite{APSS17}. The EnKF poses an improvement on this as its approximation has the form
\begin{equation*}
\mu_{n} \simeq \sum^{J}_{j=1} \delta_{u_{n}}^{(j)},
\end{equation*}
which excludes the weights. The EnKF for inverse problems, similarly to the EnKF, can be into two steps: a \emph{prediction step} and an \emph{update step}. The {prediction step} can be interpreted as mapping an ensemble of particles $u^{(j)}_n$ into the data space where we define our sample means for $J$ ensemble members
\begin{align*}
\bar{u} &= \frac{1}{J}\sum^{J}_{j=1} u^{(j)}_n, \\
\bar{\mathcal{G}} &= \frac{1}{J}\sum^{J}_{j=1} \mathcal{G}(u^{(j)}_{n}),
\end{align*}
and our empirical covariances
\begin{align}
\label{eq:up}
C^{up}_{n} &= \frac{1}{J}\sum^{J}_{k=1} (u^{(k)} - \bar{u}) \otimes (\mathcal{G}(u^{(k)}) - \bar{\mathcal{G}})^T   \\
\label{eq:pp}
C^{pp}_{n} &= \frac{1}{J}\sum^{J}_{k=1}  (\mathcal{G}(u^{(k)}) - \bar{\mathcal{G}})  \otimes  (\mathcal{G}(u^{(k)}) - \bar{\mathcal{G}})^T. 
\end{align}
The {update step} matches the mapped ensemble of particles to the data $y^{(j)}_{n+1}$ by using the calculated mean and covariances through the update formula
\begin{equation}
\label{eq:update}
u^{(j)}_{n+1} = u^{(j)}_n + C^{up}_{n} \big(C^{pp}_n + \Gamma \big)^{-1}\big(y^{(j)}_{n+1} - \mathcal{G}(u^{(j)}_n)\big),
\end{equation}
where
\begin{equation}
\label{eq:data}
y^{(j)}_{n+1} = y+ \iota^{(j)}_{n+1}, \ \ \iota^{(j)}_{n+1} \sim \mathcal{N}(0,h^{-1} \Gamma).
\end{equation}
\\
The EnKF for inverse problems possesses an important characteristic known as the \textit{subspace property} \cite{ILS13, LR09}. The property was first discussed \cite{LR09} which states that the updated ensemble of particles $u^{(j)}_{n+1}$ is preserved by the linear span of the initial ensemble $\mathcal{A} := \textrm{span}\{u^{(j)}_{0}\}$ for $j \in \{0,\ldots,J\}$. In the context of Gaussian priors, in the discrete case, this was proved in the following theorem. \\
\begin{theorem}
For every $(n,j) \in \mathbb{N} \times \{1,\ldots,J\}$ we have $u^{(j)}_{n+1} \in \mathcal{A}$ and hence $u_{n+1} \in \mathcal{A}$. \\
\end{theorem}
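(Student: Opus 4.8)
The plan is to prove this by induction on the iteration count $n$, using the update formula \eqref{eq:update} and the structure of the empirical covariance $C^{up}_n$ defined in \eqref{eq:up}. The key observation is that $C^{up}_n$, viewed as an operator on $\mathcal{Y}$, has range contained in $\mathcal{A} = \mathrm{span}\{u^{(j)}_0\}$: indeed $C^{up}_n = \frac{1}{J}\sum_k (u^{(k)} - \bar u)\otimes(\mathcal{G}(u^{(k)}) - \bar{\mathcal{G}})^T$, so for any $w \in \mathcal{Y}$ the vector $C^{up}_n w$ is a linear combination of the particles $u^{(k)}_n$ (and their mean $\bar u$), hence lies in $\mathrm{span}\{u^{(j)}_n\}$. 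Combined with \eqref{eq:update}, this shows $u^{(j)}_{n+1} - u^{(j)}_n \in \mathrm{span}\{u^{(k)}_n : k=1,\dots,J\}$ for every $j$.

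First I would set up the induction hypothesis: $u^{(j)}_n \in \mathcal{A}$ for all $j \in \{1,\dots,J\}$. The base case $n=0$ is immediate since $u^{(j)}_0 \in \mathrm{span}\{u^{(k)}_0\} = \mathcal{A}$ by definition. For the inductive step, assume $u^{(j)}_n \in \mathcal{A}$ for all $j$. Then $\bar u \in \mathcal{A}$ (a convex combination of elements of the subspace $\mathcal{A}$), and by the range observation above, $C^{up}_n\big(C^{pp}_n + \Gamma\big)^{-1}\big(y^{(j)}_{n+1} - \mathcal{G}(u^{(j)}_n)\big) \in \mathrm{span}\{u^{(k)}_n - \bar u\} \subseteq \mathcal{A}$. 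Adding $u^{(j)}_n \in \mathcal{A}$ and using that $\mathcal{A}$ is a linear subspace (closed under addition) gives $u^{(j)}_{n+1} \in \mathcal{A}$. This closes the induction, and the statement $u_{n+1} = \bar u_{n+1} = \frac{1}{J}\sum_j u^{(j)}_{n+1} \in \mathcal{A}$ follows since $\mathcal{A}$ is closed under linear combinations.

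The only genuinely delicate point is ensuring that the inverse $\big(C^{pp}_n + \Gamma\big)^{-1}$ is well defined — but $\Gamma$ is a fixed positive (boundedly invertible) covariance and $C^{pp}_n$ from \eqref{eq:pp} is positive semi-definite, so $C^{pp}_n + \Gamma$ is boundedly invertible on $\mathcal{Y}$, and this inverse just produces some fixed element of $\mathcal{Y}$ that is then hit by $C^{up}_n$; the argument is insensitive to its precise value. I expect the main obstacle, if any, to be purely notational: carefully tracking that the randomized data $y^{(j)}_{n+1}$ from \eqref{eq:data} enters only through the argument of $C^{up}_n\big(C^{pp}_n+\Gamma\big)^{-1}(\cdot)$, so that no term outside $\mathcal{A}$ is ever introduced regardless of the realization of the perturbations $\iota^{(j)}_{n+1}$. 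Everything else is a routine linear-algebra induction.
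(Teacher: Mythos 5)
Your proof is correct. The paper itself does not reproduce an argument for this theorem --- it simply defers to Iglesias, Law and Stuart \cite{ILS13} --- but the induction you give (base case $u^{(j)}_0 \in \mathcal{A}$ by definition; inductive step using that the range of $C^{up}_n$ is spanned by the centred particles $u^{(k)}_n - \bar{u}$, so the Kalman update increment stays in $\mathcal{A}$ regardless of the realization of $y^{(j)}_{n+1}$) is precisely the standard argument from that reference, and it is also the same structure the paper uses later when proving the hierarchical analogue in Theorem \ref{theorem_1}. One cosmetic remark: $\bar{u}$ is an average, i.e.\ a linear combination, of elements of the subspace $\mathcal{A}$, so calling it a ``convex combination'' is unnecessary --- membership in $\mathcal{A}$ follows simply from $\mathcal{A}$ being a linear subspace.
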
 
\begin{proof}
The proof can be found in \cite{ILS13} by Iglesias et al.
\end{proof} \\\\
The property can be interpreted as given an initial ensemble, with particular set features depending how it is chosen, our solution to the inverse problem \eqref{eq:ip} will remain in the form that it is chosen initially. This can be advantageous if we know that the underlying unknown $u$ is of a similar form to the initial ensemble, where the converse of this is that it poses a limitation if they differ significantly. 
\subsection{Continuous-time limit}
\subsubsection{Nonlinear noisy case}
The continuous-time limit of the EnKF applied to inverse problems was considered in the work of Schillings et al. \cite{SS17}. We briefly recall the limit analysis here, firstly by considering the nonlinear noisy case. The limit here arises by taking the parameter $h \rightarrow 0$ from \eqref{eq:data}. We define $u_n = \{u^{(j)}_n\}^J_{j=1}$ and assume that $u_n \approx u(nh)$. Our update step \eqref{eq:update} can now be written in the form
\begin{align*}
u^{(j)}_{n+1} &= u^{(j)}_{n} + hC^{up}_n(u_n) \big(hC^{pp}_{n}(u_n) + \Gamma \big)^{-1} \big(y - \mathcal{G}(u^{(j)}_n)\big)\\ &+  hC^{up}_n(u_n) \big(hC^{pp}_{n}(u_n) + \Gamma \big)^{-1} \iota^{(j)}_{n+1} 
\\
&= u^{(j)}_{n} + hC^{up}_n(u_n) \big(hC^{pp}_{n}(u_n) + \Gamma \big)^{-1} \big(y - \mathcal{G}(u^{(j)}_n)\big) \\ &+  h^{\frac{1}{2}}C^{up}_n(u_n) \big(hC^{pp}_{n}(u_n) + \Gamma \big)^{-1}\sqrt{\Gamma}\zeta^{(j)}_{n+1},
\end{align*}
where $\zeta^{(j)}_{n+1} \sim \mathcal{N}(0,I)$. By taking the limit $h \rightarrow 0$, our limit can be viewed as a tamed Euler-Maruyama type discretization of the stochastic differential equations (SDEs)
\begin{equation*}
\frac{du^{(j)}}{dt} = C^{up}(u)\Gamma^{-1}\big(y - \mathcal{G}(u^{(j)})\big) + C^{up}(u)\sqrt{\Gamma^{-1}}\frac{dW^{(j)}}{dt},
\end{equation*}
with $W^{(j)}$ denoting independent cylindrical  Brownian motions. By substituting the form of the covariance operator \eqref{eq:up} we see
\begin{equation}
\label{eq:gen}
\frac{du^{(j)}}{dt} = \frac{1}{J}\sum^{J}_{k=1} \Big\langle \mathcal{G}(u^{(k)}) - \bar{\mathcal{G}},y- \mathcal{G}(u^{(j)}) + \sqrt{\Gamma}\frac{dW^{(j)}}{dt} \Big\rangle_{\Gamma}(u^{(k)} - \bar{u}).
\end{equation}
This derivation of the limit satisfies a generalization of the subspace property in continuous-time provided there is a solution to \eqref{eq:gen},  as the vector field is in the linear span of the ensemble. As we have just analyzed the limit in the noisy-case we will now turn our attention towards the linear noise-free case. 
\subsubsection{Linear noise-free case}
For this we take our forward operator $\mathcal{G}(\cdot) = A\cdot$ to be bounded and linear. Using this notion and by substituting our linear operator $A$ in \eqref{eq:gen} we have the following diffusion limit
\begin{equation}
\label{eq:linear}
\frac{du^{(j)}}{dt} = \frac{1}{J}\sum^{J}_{k=1} \Big\langle A(u^{(k)}- \bar{u}),y- Au^{(j)} + \sqrt{\Gamma}\frac{dW^{(j)}}{dt} \Big\rangle_{\Gamma}(u^{(k)} - \bar{u}).
\end{equation} 
By defining the empirical covariance operator 
\begin{equation*}
C(u) = \frac{1}{J}\sum^{J}_{k=1}(u^{(k)} - \bar{u}) \otimes (u^{(k)} - \bar{u})^T,	
\end{equation*}
and taking $\Gamma=0$ we can express \eqref{eq:linear} as 
\begin{equation}
\label{eq:gf-nh}
\frac{du^{(j)}}{dt} = -C(u) D_{u} \Phi(u^{(j)};y),
\end{equation}
with
\begin{equation*}
\Phi(u;y)= \frac{1}{2} \| \Gamma ^{-1/2}(y - Au)\|^2.
\end{equation*}
Thus we note that each particle performs a preconditioned gradient descent for $\Phi(\cdot;y)$ where all the gradient descents are preconditioned through the covariance $C(u)$. Since our covariance operator $C(u)$ is semi-positive definite we have that
\begin{equation*}
\frac{d}{dt}\Phi(u(t);y) = \frac{d}{dt}\frac{1}{2} \| \Gamma ^{-1/2}(y - Au)\|^2 \leq 0,
\end{equation*}
which provides a bound on $\|Au(t)\|_{\Gamma}$. In this case it was shown, through Theorem 2. in \cite{SS17}, that the gradient flow structure provides the existence of a solution satisfying the subspace property.
\section{Hierarchical ensemble Kalman inversion}\label{section-3}
In order to derive continuous-time limits we first recall a few properties of the hierarchical ensemble Kalman inversion (EKI). This will include newly defined update equations where we consider both the centred and non-centred approaches towards generating our prior measure $\mu_0$. Our prior $\mu_0 \sim \mathcal{N}(0,\mathcal{C})$ will be assumed to be of a Gaussian form with a Whittle-Mat\'{e}rn covariance function
\begin{equation}
\label{eq:fn}
c(x,x') = \sigma^2\frac{2^{1-\nu}}{\Gamma(\nu)} \bigg(\frac{|x-x'|}{\ell}\bigg)^{\nu}K_{\nu}\bigg(\frac{|x-x'|}{\ell}\bigg), \ \ \ \ x,x' \in \mathbb{R}^d,
\end{equation} 
where $K_{\nu}$ denotes a modified Bessel function of the second kind and $\Gamma(\nu)$ is a Gamma function. From \eqref{eq:fn} we also have the inclusion of three hyperparameters; the amplitude $\sigma \in \mathbb{R}$, the regularity $\nu = \alpha + d/2 \in \mathbb{R^{+}}$ and the length-scale $\ell  \in \mathbb{R}^+$. We can explicitly represent this covariance function through the following stochastic partial differential equation (SPDE), which is derived in \cite{RHL14},
\begin{equation}
\label{eq:SPDE}
(I-\ell^2 \triangle)^{\frac{\alpha}{2}}u = \ell^{d/2}\sqrt{\beta}\xi,
\end{equation}
 where $\xi  \in H^{-s}(D)$, $s>\frac{d}{2}$, for $D \subset \mathbb{R}^d$  is Gaussian white noise and
$$\beta=\sigma^2\frac{2^d\pi^{d/2}\Gamma(\alpha)}{\Gamma(\alpha-\frac{d}{2})}.$$
 Taking the SPDE defined above with $\beta \equiv 1$ we can rewrite \eqref{eq:SPDE} as
 \begin{equation}
\label{eq:easy}
\mathcal{C}^{-\frac{1}{2}}_{\theta}u = \xi,
\end{equation}
where $\theta = (\sigma,\alpha,\ell) \in \mathbb{H}$ denotes the collection of hyperparameters. The SPDE \eqref{eq:SPDE} is a common way of representing and expressing Gaussian random fields. This approach introduced by Lindgren et al.\ \cite{LRL11} was motivated to act as alternative to the Karhunen-Lo\`{e}ve expansion which posed significant computational benefits. They showed that the solution to the SPDE \eqref{eq:SPDE} omitted a covariance structure of the form \eqref{eq:fn}. Hierarchical modelling in statistics \cite{PRS07} has become quite crucial for better understanding of estimating the underlying unknown. 

This can be translated to inverse problems where we are not only interested in the field $u$ but its  hyperparameters associated with it. Within hierarchical modelling there are commonly two approaches one can take: the \emph{centred} approach and the \emph{non-centred} approach. These approaches were derived by Papaspiliopoulos et al. in \cite{PRS03, PRS07} in the context of Gaussian processes for computational statistics. Translating this to our inverse setting, the non-centred approach can be viewed as the parameterization under which we aim to solve $(\xi,\theta) \in H^{-s}(D) \times \mathbb{H}$ from \eqref{eq:easy}. While the centred approach differs as under its parameterization we aim to solve for $(u,\theta) \in \mathcal{X} \times \mathbb{H}$ from \eqref{eq:easy}. In terms of how the quantities $(u,\theta)$ and $(\xi,\theta)$ differ, their respective prior forms will be different as for the non-centred approach $\xi$ and $\theta$ are independent. Before discussing each approach in more detail we present an important proposition which states both approaches are equivilant when generating samples from \eqref{eq:SPDE}.  \\
\begin{proposition}
Given  a Gaussian random field $u$ with covariance operator $\mathcal{C}_{\theta}$, the centred and non-centred approaches to generate $u$ are equivalent. \\
\end{proposition}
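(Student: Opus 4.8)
The plan is to exhibit an explicit measure-preserving bijection between the two parameterizations and then argue that pushing forward the respective priors yields the same law for $u$. The key observation is that equations \eqref{eq:easy} and \eqref{eq:SPDE} already give a deterministic relation between $\xi$ and $u$ for fixed $\theta$, namely $u = \mathcal{C}_\theta^{1/2}\xi$. So for fixed $\theta \in \mathbb{H}$ the map $T_\theta \colon \xi \mapsto \mathcal{C}_\theta^{1/2}\xi$ is a bijection from $H^{-s}(D)$ onto $\mathcal{X}$ (with inverse $\mathcal{C}_\theta^{-1/2}$), and the full change of variables on the product space is $(\xi,\theta) \mapsto (T_\theta\xi,\theta) = (u,\theta)$.

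First I would recall the two prior constructions precisely: in the non-centred parameterization the prior on $(\xi,\theta)$ is the product $\pi(d\xi,d\theta) = \mathcal{N}(0,I)(d\xi)\,\pi_0(d\theta)$, with $\xi$ white noise independent of $\theta$; in the centred parameterization the prior on $(u,\theta)$ is the disintegrated measure $\nu(du,d\theta) = \mathcal{N}(0,\mathcal{C}_\theta)(du)\,\pi_0(d\theta)$, i.e. conditionally on $\theta$ the field $u$ is Gaussian with covariance $\mathcal{C}_\theta$. Second, I would show that the pushforward of $\pi$ under $(\xi,\theta)\mapsto(\mathcal{C}_\theta^{1/2}\xi,\theta)$ equals $\nu$. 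Since $\theta$ is untouched, it suffices to check that for $\pi_0$-a.e.\ fixed $\theta$ the pushforward of white noise $\mathcal{N}(0,I)$ under $T_\theta = \mathcal{C}_\theta^{1/2}$ is $\mathcal{N}(0,\mathcal{C}_\theta)$; this is the standard fact that a linear image of a Gaussian is Gaussian with covariance $\mathcal{C}_\theta^{1/2}\, I\, (\mathcal{C}_\theta^{1/2})^{*} = \mathcal{C}_\theta$. Third, since the two priors are related by a measurable bijection that is the identity in the $\theta$-coordinate, and since in both cases the quantity of interest is the \emph{same} function $u$ of the underlying variables (explicitly $u$ in the centred case, and $u = \mathcal{C}_\theta^{1/2}\xi$ in the non-centred case), the induced marginal law of the generated random field $u$ is identical; hence the two approaches generate $u$ with the same distribution, namely the Gaussian random field with covariance operator $\mathcal{C}_\theta$ (averaged over $\theta\sim\pi_0$).

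The main obstacle is purely functional-analytic bookkeeping rather than conceptual: one must be careful about the spaces on which $\mathcal{C}_\theta^{1/2}$ and white noise live, since $\xi \in H^{-s}(D)$ with $s > d/2$ is not an element of $\mathcal{X}$, and $\mathcal{C}_\theta^{1/2}$ must be shown to be a bounded (indeed Hilbert--Schmidt onto $\mathcal{X}$) operator so that $u = \mathcal{C}_\theta^{1/2}\xi$ is almost surely well-defined in $\mathcal{X}$. This regularity is exactly what the SPDE \eqref{eq:SPDE} encodes — the operator $(I-\ell^2\triangle)^{-\alpha/2}$ smooths by $\alpha$ derivatives — so I would invoke the construction of Lindgren et al.\ \cite{LRL11} and Roininen et al.\ \cite{RHL14} to guarantee that $\mathcal{C}_\theta^{1/2}\xi$ defines a genuine Gaussian random field with the Whittle--Mat\'ern covariance \eqref{eq:fn}. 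A secondary subtlety is measurability of $\theta \mapsto \mathcal{C}_\theta^{1/2}$ so that the product-space map is jointly measurable; this follows from continuous dependence of the Green's function of $(I-\ell^2\triangle)^{\alpha/2}$ on the parameters $(\sigma,\alpha,\ell)$. With these technical points in place the equivalence is immediate from the change-of-variables argument above.
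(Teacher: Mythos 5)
Your argument is correct and is essentially the paper's own proof in coordinate-free form: the paper verifies the same fact --- that the linear map $T_\theta=\mathcal{C}_\theta^{1/2}$ carries white noise to $\mathcal{N}(0,\mathcal{C}_\theta)$ --- by expanding $u$ and $\xi$ in the eigenbasis of $\mathcal{C}_\theta$ (Karhunen--Lo\`eve) and checking that the SPDE forces $\hat{u}_k=\sqrt{\lambda_k}\,\hat{\xi}_k$ with $\lambda_k^2$ the eigenvalues of $\mathcal{C}_\theta$. Your extra bookkeeping (the product prior on $(\xi,\theta)$ versus the disintegration $\mathbb{P}(u\mid\theta)\,\mathbb{P}(\theta)$, and the regularity and measurability caveats) goes beyond what the paper records, which only carries out the fixed-$\theta$ spectral computation.
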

\begin{proof}
Let $T:(\xi,\theta) \rightarrow u$ be a mapping where we choose $\mathcal{C}_{\theta}:=\ell^d \beta(I - \ell^2\Delta)^{-\alpha}$ for Equation \eqref{eq:easy}. We can express $u$ through the Karhunen-Lo\`{e}ve expansion
\begin{equation*}
 u=\sum_{k
 }\sqrt{\lambda_{k}}\hat{\xi_{k}}\phi_{k}, \ \ \ \ \ \ \hat{\xi_{k}} \sim \mathcal{N}(0,1),
 \end{equation*}
where $(\lambda^{2}_{k}, \phi_{k})$ are the eigenpairs of $\mathcal{C}_{\theta}$ for $k=1,2$.
Using the fact that both
\begin{align}
\label{eq:u}
u &=\sum_{k}\hat{u_{k}}\phi_{k},  \\
\label{eq:xi}
\xi &= \sum_{k}\hat{\xi_{k}}\phi_{k}, 
\end{align}
we see after substituting \eqref{eq:u} and \eqref{eq:xi} into \eqref{eq:easy}, where $k =  \left(
\begin{array}{c}
k_1\\
k_2\\
\end{array}
\right)
,$ that
\begin{align}
\frac{1}{\ell^{d/2}\sqrt{\beta}} (I - \ell^2|k|^2)^{\frac{\alpha}{2}}\sum_{k}\hat{u_{k}}\phi_{k}  &=  \sum_{k}\hat{\xi_{k}}\phi_{k}, \nonumber \\
\frac{1}{\ell^{d/2}\sqrt{\beta}}(I - \ell^2|k|^2)^{\frac{\alpha}{2}} \hat{u_{k}} &= \hat{\xi_{k}}. \nonumber
\end{align}
This implies 
\begin{equation*}
\hat{u_{k}}={\ell^{d/2}\sqrt{\beta}}(I-\ell^2 |k|^2)^{-\frac{\alpha}{2}} \hat{\xi_{k}},
\end{equation*}
\\
which is equivalent to $\lambda^2_k:= (I - \ell^2|k|^2)^{-\alpha}$.
\end{proof}

\subsection{Centred formulation}
\label{subsec:cen}
We now characterize our inverse problem through the centre formulation. For this approach our prior will have the form 
\begin{equation}
\label{eq:prior-c}
\mathbb{P}(u,\theta) = \mathbb{P}({u|\theta)} \mathbb{P}(\theta),
\end{equation} 
via the definition of conditional probability. We are interested in the recovery of our unknown $u \in \mathcal{X}$ from noisy measurements of our data $y$ where
\begin{equation}
\label{eq:inv-c}
y=\mathcal{G}(u)+\eta, \ \ \ \ \eta \sim \mathcal{N}(0,\Gamma). 
\end{equation}
We can further define a potential for our inverse problem $\Phi(u;y): \mathcal{X} \rightarrow \mathbb{R}$ where
\begin{equation}
\label{eq:phi-c}
\Phi(u;y) = \frac{1}{2}|y - \mathcal{G}(u)|^2_{\Gamma}.
\end{equation}
From the potential given in \eqref{eq:phi-c} we can define our data-likelihood as
\begin{equation}
\label{eq:like-c}
\mathbb{P}(y|u) = \exp \big(-\Phi(u;y)\big).
\end{equation}
Combing both our prior \eqref{eq:prior-c} and data-likelihood \eqref{eq:like-c}, via Bayes' Theorem, we can construct our posterior probability 
\newpage
\begin{align}
\mathbb{P}(u, \theta|y) &\propto \mathbb{P}(y|u)  \mathbb{P}(u, \theta) \nonumber \\
  &= \nonumber \exp \big(-\Phi(u;y)\big) \mathbb{P}(u|\theta) \mathbb{P}(\theta).
\end{align}
\begin{remark}
We note that the inverse problem associated with the centred approach \eqref{eq:inv-c} is the exact same as the non-hierarchical inverse problem \eqref{eq:ip} as the data does not depend on the updated hyperparameters. Thus in deriving continuous-time limits, the limit for our updated random field $u^{(j)}_n$ should be equivalent. \\\\
\end{remark}
As with the non-hierarchical method, we are interested in analyzing the hierarchical approaches influence on the subspace property, specifically whether they can break away from this property. With the centred approach we know that the data is only conditioned on the field $u$ and not its hyperparameters. Due to this we expect that with the centred approach, $(u, \theta)$ to lie within the span of the initial ensemble $\mathcal{A}$. The following theorem verifies this in the discrete case. \\
\begin{theorem}
\label{theorem_1}
For every $(n,j) \in \mathbb{N} \times \{1,\ldots,J\}$ we have $u^{(j)}_{n+1}$, $\theta^{(j)}_{n+1} \in \mathcal{A}$ and hence $u_{n+1}$, $\theta_{n+1} \in \mathcal{A}$. \\
\end{theorem}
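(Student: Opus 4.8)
The plan is to mimic the proof of the original subspace property (Theorem~1, from \cite{ILS13}), now carried out for the augmented state $z^{(j)}_n = (u^{(j)}_n, \theta^{(j)}_n)$. First I would write down the centred EKI update equations explicitly in the augmented variable: the ensemble $z^{(j)}_n$ evolves by a formula of the same shape as \eqref{eq:update}, namely $z^{(j)}_{n+1} = z^{(j)}_n + C^{zp}_n (C^{pp}_n + \Gamma)^{-1} (y^{(j)}_{n+1} - \mathcal{G}(u^{(j)}_n))$, where $C^{zp}_n$ is the empirical cross-covariance between the augmented ensemble $\{z^{(k)}_n\}$ and the mapped ensemble $\{\mathcal{G}(u^{(k)}_n)\}$, and $C^{pp}_n$ is exactly the covariance in \eqref{eq:pp}. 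The key structural observation, which I would state as the crux of the argument, is that by the Remark preceding the theorem the forward map in the centred formulation depends only on $u$ and not on $\theta$, so $\mathcal{G}(u^{(j)}_n)$, $\bar{\mathcal{G}}$, and hence $C^{pp}_n$ and the innovation vector $(C^{pp}_n + \Gamma)^{-1}(y^{(j)}_{n+1} - \mathcal{G}(u^{(j)}_n))$ are all unchanged from the non-hierarchical scheme; only the left factor $C^{zp}_n$ differs.

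Next I would exploit the fact that $C^{zp}_n$ is, by definition \eqref{eq:up}, a sum of outer products $(z^{(k)}_n - \bar z)\otimes(\mathcal{G}(u^{(k)}_n)-\bar{\mathcal{G}})^T$. Consequently the increment $z^{(j)}_{n+1} - z^{(j)}_n$ is a linear combination of the vectors $\{z^{(k)}_n - \bar z\}_{k=1}^J$, with scalar coefficients $\langle \mathcal{G}(u^{(k)}_n)-\bar{\mathcal{G}},\, (C^{pp}_n+\Gamma)^{-1}(y^{(j)}_{n+1}-\mathcal{G}(u^{(j)}_n))\rangle$. Hence each updated particle $z^{(j)}_{n+1}$ lies in the affine span $z^{(j)}_n + \mathrm{span}\{z^{(k)}_n - \bar z : k\}$, and therefore in $\mathrm{span}\{z^{(k)}_n : k=1,\dots,J\}$. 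This is precisely the mechanism behind the subspace property, just transported to the augmented space $\mathcal{X}\times\mathbb{H}$.

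The remaining work is an induction on $n$. For the base case one notes $z^{(j)}_0 \in \mathcal{A}$ trivially, where now $\mathcal{A} := \mathrm{span}\{z^{(j)}_0\} = \mathrm{span}\{(u^{(j)}_0,\theta^{(j)}_0)\}$ in $\mathcal{X}\times\mathbb{H}$. For the inductive step, assuming $z^{(k)}_n \in \mathcal{A}$ for all $k$, the span of $\{z^{(k)}_n\}$ is contained in $\mathcal{A}$, so by the observation of the previous paragraph $z^{(j)}_{n+1} \in \mathcal{A}$; projecting onto the two coordinates gives $u^{(j)}_{n+1}\in\mathcal{A}$ and $\theta^{(j)}_{n+1}\in\mathcal{A}$ (interpreting $\mathcal{A}$ componentwise), and averaging over $j$ gives the claims for $u_{n+1}$ and $\theta_{n+1}$. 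The only mild subtlety, and the step I expect to need the most care, is bookkeeping the augmented spaces correctly: one must be careful that ``$\in \mathcal{A}$'' is understood in $\mathcal{X}\times\mathbb{H}$ for the pair and consistently for each component, and that the cross-covariance $C^{zp}_n$ genuinely has the claimed outer-product form after the $\theta$-block is appended; once that is set up, the argument is a direct transcription of \cite{ILS13}, with the Remark doing the essential work of guaranteeing the $\theta$-dynamics feed back only through the span and not through the data misfit.
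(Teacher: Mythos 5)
Your proposal is correct and follows essentially the same route as the paper's proof: an induction in which the update increment is exhibited as a linear combination of the current ensemble members with scalar coefficients $\langle \mathcal{G}(u^{(k)}_n)-\bar{\mathcal{G}},\,(C^{pp}_n+\Gamma)^{-1}(y^{(j)}_{n+1}-\mathcal{G}(u^{(j)}_n))\rangle$, the only cosmetic difference being that you stack $(u,\theta)$ into a single augmented state with one cross-covariance $C^{zp}_n$ whereas the paper keeps the two blocks $C^{up}_n$, $C^{\theta p}_n$ separate. Your bookkeeping of the outer-product structure is, if anything, slightly more explicit than the paper's.
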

\label{thm-c}
\begin{proof}
The proof follows similarly to that in \cite{ILS13} which is based on simple induction, but with the key difference of the inclusion of our hyperparameters $\theta^{(j)}_{n}$. We define our Kalman gain matrices as
\begin{align*}
    K^{u}_n &= \begin{pmatrix}
  C^{up}_n \big(C^{pp}_{n} +  \Gamma \big)^{-1} \\
  C^{u u}_n \big(C^{pp}_{n} +  \Gamma \big)^{-1}
    \end{pmatrix},
  \end{align*}
  \begin{align*}
    K^{\theta}_{n} &= \begin{pmatrix}
  C^{\theta p}_{n} \big(C^{pp}_{n} +  \Gamma \big)^{-1} \\
  C^{\theta \theta}_{n} \big(C^{pp}_{n} +  \Gamma \big)^{-1}
    \end{pmatrix},
  \end{align*}
  with empirical covariances $C^{up}_n, C^{uu}_n,  C^{\theta p}_n$. Recalling that the update equations are given as 
  \begin{align}
  \label{eq:xi_nc_u}
  u^{(j)}_{n+1} &= u^{(j)}_{n} + C^{up}_{n} (C^{pp}_{n} +  \Gamma)^{-1}(y^{(j)}_{n+1}-\mathcal{G}(u^{(j)}_n)), \\
  \label{eq:theta_nc_u}
\theta^{(j)}_{n+1} &= \theta^{(j)}_{n} + C^{\theta p}_{n} (C^{pp}_{n} + \Gamma)^{-1}(y^{(j)}_{n+1}-\mathcal{G}(u^{(j)}_n)).
  \end{align}
  By defining
  \begin{equation*}
  d^{(j)}_{n+1} = (C^{pp}_{n} + \Gamma)^{-1} (y^{(j)}_{n+1}-\mathcal{G}(u^{(j)}_n)),
  \end{equation*}
  Then the update formulas \eqref{eq:xi_nc_u} and \eqref{eq:theta_nc_u}  can be defined as 
  \begin{align*}
  u^{(j)}_{n+1} &= u^{(j)}_{n} + \frac{1}{J}\sum_{j=1}^{J}\langle \bar{\mathcal{G}}_{n+1}, d^{(j)}_{n+1} \rangle u^{(j)}_{n+1} \\
  &=  u^{(j)}_{n} + \frac{1}{J}\sum_{j=1}^{J}\langle \bar{\mathcal{G}}_{n+1}, d^{(j)}_{n+1} \rangle u^{(j)}_{n}, \\
  \theta^{(j)}_{n+1} &= \theta^{(j)}_{n} + \frac{1}{J}\sum_{j=1}^{J}\langle \bar{\mathcal{G}}_{n+1}, d^{(j)}_{n+1} \rangle \theta^{(j)}_{n+1} \\
  &=  \theta^{(j)}_{n} + \frac{1}{J}\sum_{j=1}^{J}\langle \bar{\mathcal{G}}_{n+1}, d^{(j)}_{n+1} \rangle \theta^{(j)}_{n}.
  \end{align*}
  At step size $n$ this shows that  $u^{(j)}_{n+1}, \theta^{(j)}_{n+1}  \in \mathcal{A}$ for $j \in \{1,\ldots,J\}.$ Hence since our outputs $u_{n+1},\theta_{n+1}$ at the end are defined as  
\begin{align*}
u_{n+1} &= \frac{1}{J} \sum^{J}_{j=1}u^{(j)}_{n+1}, \\
\theta_{n+1} &= \frac{1}{J} \sum^{J}_{j=1}\theta^{(j)}_{n+1},
\end{align*}
  it follows that both $u_{n+1}, \theta_{n+1} \in \mathcal{A}$.
\end{proof}

\subsection{Non-centred formulation}
As done previously in Subsection \ref{subsec:cen} we characterize our inverse problem but now for the non-centred formulation. For this approach our prior will have the form 
\begin{equation}
\label{eq:prior-nc}
\mathbb{P}(\xi,\theta) = \mathbb{P}(\xi)  \mathbb{P}(\theta),
\end{equation} 
via the definition of the non-centred approach in \cite{PRS03}. We are interested in the recovery of our unknown $(u,\theta) \in \mathcal{X} \times \mathbb{H}$ from noisy measurements of our data $y$ where
\begin{equation}
\label{eq:inv-nc}
y=\mathcal{G}(T(\xi,\theta))+\eta, \ \ \ \  \eta \sim \mathcal{N}(0,\Gamma),
\end{equation}
where $ T:(\xi,\theta) \rightarrow u$ is an operator such that $u = T(\xi,\theta)$. This modified formulation of our unknown arises from the SPDE \eqref{eq:SPDE}. As before we can further define a potential for our inverse problem $\Phi _{\mathrm{NC}}(\xi,\theta;y): \mathcal{X} \times \mathbb{H} \rightarrow \mathbb{R}$ where 
\begin{equation}
\label{eq:phi-nc}
\Phi _{\mathrm{NC}}(\xi,\theta;y) = \frac{1}{2}|y - \mathcal{G}(T(\xi,\theta))|^2_{\Gamma}.
\end{equation}
With $\textrm{NC}$ denoting non-centred. From the potential given in \eqref{eq:phi-nc} we can define our data-likelihood as
\begin{equation}
\label{eq:like-nc}
\mathbb{P}(y|\xi,\theta) = \exp \big(-\Phi _{\mathrm{NC}}(\xi,\theta;y)\big).
\end{equation}
Combing both our prior \eqref{eq:prior-nc} and data-likelihood \eqref{eq:like-nc}, via Bayes' Theorem, we can construct our posterior probability 
\begin{align}
\mathbb{P}(\xi, \theta|y) &\propto \mathbb{P}(y|\xi,\theta)  \mathbb{P}(\xi, \theta) \nonumber \\
  &= \nonumber \exp \big(-\Phi _{\mathrm{NC}}(\xi,\theta;y)\big)  \mathbb{P}(\xi)  \mathbb{P}(\theta). 
\end{align}
\begin{remark}
\label{remark-nc}
Unlike the centred approach, the non-centred formulation also differs as shown in the inverse problem \eqref{eq:inv-nc}, namely that the data it is dependent on both the field $u$ and the set of hyperparameters $\theta$ which is based on the transformation $T$. This would suggest the continuous-time limits would be different to the centred approach.
\end{remark} \\\\
The difference in the prior form between both approaches is important in understanding why the non-centred approach is advantageous. Given  that we are using $\xi$ and that it is independent on the initialization of $\theta$ in the prior form, and under the transformation $T$, this allows a much less restriction induced by the subspace property. As a result both $\xi$ and $\theta$ mix and update well, showcasing improvements over the centred approach. Also Numerics in \cite{CIRS17} demonstrated this for a range of non-linear PDE based inverse problems. The following theorem highlights this key difference related to the subspace property.
\\
\begin{theorem}
\label{nc_theorem}
For every $(n,j) \in \mathbb{N} \times \{1,\ldots,J\}$ we have $\xi^{(j)}_{n+1}$, $\theta^{(j)}_{n+1} \in \mathcal{A}$ and  $\xi_{n+1}$, $\theta_{n+1} \in \mathcal{A}$ hence $u_{n+1} \notin T\mathcal{A}$, where $T\mathcal{A}$ is the space containing the transformed ensemble of particles.
\end{theorem}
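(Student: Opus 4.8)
The plan is to split the statement into two parts: the subspace property for the non-centred variables $(\xi,\theta)$, and the consequent failure of the transformed unknown $u = T(\xi,\theta)$ to lie in $T\mathcal{A}$. For the first part, I would mimic the induction already used in Theorem \ref{theorem_1} verbatim, since the structure of the update equations is identical; indeed, in the non-centred parameterization the EKI update acts on the augmented state $(\xi,\theta)$ exactly as it acted on $(u,\theta)$ in the centred case, with the only change being that the forward map is now $\mathcal{G}\circ T$ rather than $\mathcal{G}$. Writing the Kalman gains $K^{\xi}_n$ and $K^{\theta}_n$ with blocks $C^{\xi p}_n (C^{pp}_n+\Gamma)^{-1}$, $C^{\theta p}_n(C^{pp}_n+\Gamma)^{-1}$, etc., and setting $d^{(j)}_{n+1} = (C^{pp}_n+\Gamma)^{-1}(y^{(j)}_{n+1} - \mathcal{G}(T(\xi^{(j)}_n,\theta^{(j)}_n)))$, the updates become affine combinations
\[
\xi^{(j)}_{n+1} = \xi^{(j)}_n + \frac{1}{J}\sum_{k=1}^{J}\langle \bar{\mathcal{G}}_{n+1}, d^{(k)}_{n+1}\rangle\,\xi^{(k)}_n,
\]
and likewise for $\theta$. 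Since the ensemble is initialized in $\mathcal{A} = \mathrm{span}\{(\xi^{(j)}_0,\theta^{(j)}_0)\}$ (here $\mathcal{A}$ should be read in the $(\xi,\theta)$ coordinates), the induction closes immediately: each new particle is a linear combination of the previous ones, hence stays in $\mathcal{A}$, and so do the sample means $\xi_{n+1}$, $\theta_{n+1}$.

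For the second part — the claim that $u_{n+1} \notin T\mathcal{A}$ — the key observation is that $T$ is \emph{not linear}. From the proof of Proposition 3.1, the action of $T$ in the Karhunen–Loève basis is $\hat{u}_k = \ell^{d/2}\sqrt{\beta}\,(I - \ell^2|k|^2)^{-\alpha/2}\hat{\xi}_k$, where $\ell$, $\beta$, and $\alpha$ all live in $\theta$; thus $T(\xi,\theta)$ depends on $\theta$ in a genuinely nonlinear (and $k$-dependent) fashion. Consequently, even though $(\xi^{(j)}_{n+1},\theta^{(j)}_{n+1})$ is an affine combination of the initial pairs, applying $T$ does not commute with that combination: in general $T\big(\sum_k c_k (\xi^{(k)}_0,\theta^{(k)}_0)\big) \neq \sum_k c_k T(\xi^{(k)}_0,\theta^{(k)}_0)$. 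So while $\xi_{n+1}\in\mathcal{A}$ in $\xi$-space, the reconstructed field $u_{n+1} = T(\xi_{n+1},\theta_{n+1})$ — or, depending on the convention, the ensemble $T(\xi^{(j)}_{n+1},\theta^{(j)}_{n+1})$ — is a nonlinear image of the span and therefore need not lie in the image $T\mathcal{A}$ of the fixed initial span. I would make this precise by exhibiting the failure of linearity: pick two initial particles with distinct $\theta$-components, form a nontrivial convex combination, and check that its image under $T$ differs from the corresponding combination of images, using the explicit KL-mode formula above. This is exactly the mechanism by which the non-centred hierarchical method ``breaks the subspace property'', and it is worth emphasizing that it is the $\theta$-dependence of $T$, not any extra randomness, that is responsible.

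The main obstacle is conceptual rather than computational: the statement as written is slightly informal, since ``$u_{n+1}\notin T\mathcal{A}$'' cannot hold literally for \emph{every} $n$ and every realization — for instance, if all initial $\theta^{(j)}_0$ coincide, then $T$ restricted to the relevant slice is linear and the property would not break. The honest version of the claim is that $u_{n+1}\notin T\mathcal{A}$ \emph{generically}, i.e. whenever the $\theta^{(j)}_n$ are not all equal (which is the typical situation once the hyperparameters start updating), and I would state and prove it in that form. A secondary technical point is pinning down what $T\mathcal{A}$ means when $\mathcal{A}$ is a span in the product space $\mathcal{X}\times\mathbb{H}$: one must decide whether $T\mathcal{A} = \{T(v,\vartheta) : (v,\vartheta)\in\mathcal{A}\}$ or the span of $\{T(\xi^{(j)}_0,\theta^{(j)}_0)\}$; under either reading the nonlinearity of $T$ furnishes the escape, but the argument for the spanned version requires slightly more care, again reducing to the non-collinearity of the KL rescaling factors as $\alpha$, $\ell$ vary over the ensemble.
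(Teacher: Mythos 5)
Your first half coincides with the paper's argument: the paper's proof of this theorem simply says it ``follows very similarly to Theorem \ref{theorem_1}'', i.e.\ the same induction on the update formulas with $\mathcal{G}$ replaced by $\mathcal{G}\circ T$, which is exactly the induction you write out. For the second half, however, you go considerably further than the paper does. The paper's entire justification for $u_{n+1}\notin T\mathcal{A}$ is the one-sentence assertion that the transformation $T(\xi,\theta)=u$ ``abides by a difference space'' than $\mathcal{A}$; no mechanism is given. You supply the actual mechanism --- $T$ is nonlinear in $\theta$ (visible in the Karhunen--Lo\`eve rescaling $\hat{u}_k = \ell^{d/2}\sqrt{\beta}\,(I-\ell^2|k|^2)^{-\alpha/2}\hat{\xi}_k$), so applying $T$ does not commute with taking linear combinations of ensemble members --- and, importantly, you correctly flag that the statement cannot hold literally for every $n$ and every realization (e.g.\ if all $\theta^{(j)}_0$ coincide, $T$ restricted to that slice is linear and the subspace property is not broken). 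That genericity caveat is a genuine and worthwhile sharpening: the paper's claim as stated is really ``$u_{n+1}$ need not lie in $T\mathcal{A}$'', and your proposed reformulation (non-membership whenever the $\theta^{(j)}_n$ are not all equal) is the honest version. Your point about disambiguating $T\mathcal{A}$ (image of the span versus span of the images) is likewise something the paper leaves undefined beyond the phrase ``the space containing the transformed ensemble of particles''. In short: same route as the paper on the subspace induction, but a substantively more careful treatment of the escape from $T\mathcal{A}$, where the paper offers only an assertion.
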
 \\
\begin{proof}
The proof follows very similarly to Theorem \ref{theorem_1} but with the difference of the transformation $T(\xi,\theta)=u$ which abides by a difference space than the one of the initial ensemble $\mathcal{A}$. Therefore $u_{n+1} \notin T\mathcal{A}$.
\end{proof}

\begin{table}[h]
\begin{tabular}{l | c |c}\label{tablecomparison}
 & \textbf{Centred approach} & \textbf{Non-centred approach}  \\\hline
Inverse problem & ${y} = \mathcal{G}({u}) + \eta$ &  ${y}  = \mathcal{G}(T({\xi,\theta})) + \eta$\\ \hline
Prior  &$\mu_0 \equiv \mathbb{P}({u,\theta})$ &  $\mu_0 \equiv \mathbb{P}({\xi,\theta})$ \\
&$\mu_0 \equiv \mathbb{P}({u}|{\theta}) \times \mathbb{P}({u})$ &  $\mu_0 \equiv \mathbb{P}({\xi}) \times \mathbb{P}({\theta})$\\\hline
Likelihood  &$\Phi({u};{y} ) = \frac{1}{2} | {y}  - \mathcal{G}({u})|^2_{\Gamma}$ & $\Phi_{\textrm{NC}}({\xi,\theta};{y} ) = \frac{1}{2} | {y}  - \mathcal{G}(T({\xi,\theta}))|^2_{\Gamma}$\\ 
 &$\mathbb{P}({y}|{u}) = e^{-\Phi({u};{y})}$ & $\mathbb{P}(y|{\xi,\theta}) = e^{-\Phi_{\textrm{NC}}({\xi,\theta};{y})}$\\ \hline
Posterior & $\mathbb{P}({u,\theta}|{y} ) \propto \mathbb{P}(y|{u}) \times \mathbb{P}({u})$ & $\mathbb{P}({\xi,\theta}|{y} ) \propto \mathbb{P}(y|{\xi,\theta}) \times \mathbb{P}({\xi})$\\
 & $\mathbb{P}({u,\theta}|{y} ) \propto  e^{-\Phi({u};{y} )} \mathbb{P}({u}|{\theta})  \mathbb{P}({\theta})$ & $\mathbb{P}({\xi,\theta}|{y} ) \propto  e^{-\Phi_{\textrm{NC}}({\xi,\theta};{y} )} \mathbb{P}({\xi}) \mathbb{P}({\theta})$
\end{tabular}
\label{table1}
\caption{Comparison of both hierarchical approaches.}
\end{table}
\section{Hierarchical continuous-time limits}\label{section-4}
\subsection{Centred approach}
\subsection{Nonlinear noisy case}
We begin our derivation of a continuous-limit for the hierarchical iterative EnKF method by considering firstly the centred approach. As we are interested now in $(u,\theta) \in \mathcal{X} \times \mathbb{H}$ we can construct a general posterior measure for $(u,\theta|y)$
\begin{equation*}
\mu(du,d \theta) = \frac{1}{Z}\exp(-\Phi(u;y))\mu_0(du,d\theta),
\end{equation*}
with
\begin{equation*}
Z: = \int_{\mathcal{X} \times \mathbb{H}}\exp(-\Phi(u;y))\mu_0(du,d\theta).
\end{equation*}
Similarly with the non-hierarchical EnKF, we can derive an approximation of the posterior measure through introducing an artificial dynamical system $\mu_{n+1} = L_n \mu_{n}$ where
\begin{equation*}
\mu_{n+1}(du,d\theta) = \frac{1}{Z_n} \exp(-h\Phi(u;y))\mu_n(du, d\theta),
\end{equation*}
and
\begin{equation*}
Z_{n}: = \int_{\mathcal{X} \times \mathbb{H}}\exp(-h\Phi(u;y))\mu_n(du,d\theta).
\end{equation*}
To construct our continuous-time limit we recall that the updates equations with the hierarchical iterative EnKF for 
\begin{align*}
u^{(j)}_{n+1} = u^{(j)}_{n} + C^{up}_n (C^{pp}_n + h^{-1} \Gamma)^{-1}(y^{(j)}_{n+1} - \mathcal{G}(u^{(j)}_n)) \\
\theta^{(j)}_{n+1} = \theta^{(j)}_{n} + C^{\theta p}_n (C^{pp}_n + h^{-1} \Gamma)^{-1}(y^{(j)}_{n+1} - \mathcal{G}(u^{(j)}_n)).
\end{align*}
Our update equations contain empirical covariance operators
\begin{align*}
C^{up}_{n} &= \sum^{J}_{k=1} (u^{(k)} - \bar{u}) \otimes (\mathcal{G}(u^{(k)}) - \bar{\mathcal{G}})  \\
C^{\theta p}_{n} &= \sum^{J}_{k=1} (\theta^{(k)} - \bar{\theta}) \otimes (\mathcal{G}(u^{(k)}) - \bar{\mathcal{G}})  \\
C^{pp}_{n} &= \sum^{J}_{k=1}  (\mathcal{G}(u^{(k)}) - \bar{\mathcal{G}})  \otimes  (\mathcal{G}(u^{(k)}) - \bar{\mathcal{G}}),
\end{align*}
where, as before, 
\begin{equation*}
\bar{\theta} = \frac{1}{J} \sum^{J}_{k=1} \theta^{(k)}_{n}, \ \ \ \ \bar{u} = \frac{1}{J} \sum^{J}_{k=1} u^{(k)}_{n}, \ \ \ \ \bar{\mathcal{G}} = \frac{1}{J} \sum^{J}_{k=1} \mathcal{G}(u^{(k)}_{n}),
\end{equation*}
for $j=1,\ldots,J$. We consider first the linear noise-free case where our forward operator takes the form $\mathcal{G}(\cdot) = A\cdot$ with $A \in \mathcal{L}((\mathcal{X} \times \mathbb{H}),\mathcal{Y})$. By taking the limit of our update equations as $h \rightarrow 0$ this leads to an Euler-Maruyama (EM) discretization of the form
\begin{align}
\label{eq:u1}
\frac{du^{(j)}}{dt} = C^{up}(u)\Gamma^{-1}\big(y - \mathcal{G}(u^{(j)})\big) + C^{up}(u)\sqrt{\Gamma^{-1}}\frac{dW^{(j)}}{dt} \\
\label{eq:u2}
\frac{d\theta^{(j)}}{dt} = C^{\theta p}(u)\Gamma^{-1}\big(y - \mathcal{G}(u^{(j)})\big) + C^{\theta p}(u)\sqrt{\Gamma^{-1}}\frac{dW^{(j)}}{dt},
\end{align}
such that $W^{(j)}$ are cylindrical  Brownian motions. By substituting the covariance operators $C^{up}_n, C^{\theta p}_n$ in \eqref{eq:u1} and \eqref{eq:u2} this leads to

\begin{align}
\label{eq:lim_c1}
\frac{du^{(j)}}{dt} = \frac{1}{J}\sum^{J}_{k=1} \Big\langle \mathcal{G}(u^{(k)}) - \bar{\mathcal{G}},y- \mathcal{G}(u^{(j)}) + \sqrt{\Gamma}\frac{dW^{(j)}}{dt} \Big\rangle_{\Gamma}(u^{(k)} - \bar{u}) \\ 
\label{eq:lim_c2}
\frac{d\theta^{(j)}}{dt} = \frac{1}{J}\sum^{J}_{k=1} \Big\langle \mathcal{G}(u^{(k)}) - \bar{\mathcal{G}},y- \mathcal{G}(u^{(j)}) + \sqrt{\Gamma}\frac{dW^{(j)}}{dt} \Big\rangle_{\Gamma}(\theta^{(k)} - \bar{\theta}).
\end{align}
In the hierarchical case the key distinguishment we see is firstly that our formulation of our measure differs as we take more than one underlying unknown, but also, when taking the limit $h \rightarrow 0$ we see we have coupled systems of SDEs. Using the same arguments in the non-hierarchical case given there is a solution to both \eqref{eq:lim_c1} and \eqref{eq:lim_c2}

\subsection{Linear noise-free case}
which after further substitution of the linear operator $A \in \mathcal{L}((\mathcal{X} \times \mathbb{H}),\mathcal{Y})$ our coupled SDEs read
\begin{align*}
\frac{du^{(j)}}{dt} &= \frac{1}{J}\sum^{J}_{k=1} \Big\langle A(u^{(k)}- \bar{u}),y- Au^{(j)} + \sqrt{\Gamma}\frac{dW^{(j)}}{dt} \Big\rangle_{\Gamma}(u^{(k)} - \bar{u}), \\
\frac{d\theta^{(j)}}{dt} &= \frac{1}{J}\sum^{J}_{k=1} \Big\langle A(\theta^{(k)}- \bar{\theta}),y- Au^{(j)} + \sqrt{\Gamma}\frac{dW^{(j)}}{dt} \Big\rangle_{\Gamma}(\theta^{(k)} - \bar{\theta}).
\end{align*}
Given our covariance operators for the centred approach
\begin{align}
\label{eq:cent-u}
C(u) &= \frac{1}{J}\sum^{J}_{k=1}(u^{(k)} - \bar{u}) \otimes (u^{(k)} - \bar{u}), \\
\label{eq:cent-l}
C(\theta) &= \frac{1}{J}\sum^{J}_{k=1}(\theta^{(k)} - \bar{\theta}) \otimes (\theta^{(k)} - \bar{\theta}),	
\end{align}
and $\Gamma=0$, we can express \eqref{eq:cent-u} and \eqref{eq:cent-l} as 
\begin{equation}
\label{eq:c-u}
\frac{du^{(j)}}{dt} = C(u) D_{u} \Phi(u^{(j)};y), \\
\end{equation}
where our potential is defined as
\begin{equation*}
\Phi(u;y)= \frac{1}{2} \| \Gamma ^{-1/2}(y - Au)\|^2.
\end{equation*}
As before we can interpret \eqref{eq:c-u} as each particle $\{u^{(j)}\}_{j=1}^{J}$ performing a gradient descent for $\Phi(\cdot;y)$. This is the exact same limit and gradient flow structure that we have in the non-hierarchical case \eqref{eq:gf-nh}. 
\subsection{Non-centred approach}
\subsection{Nonlinear noisy case}
Our construction of our posterior measure differs with the non-centred approach as we have a modified potential \eqref{eq:phi-nc}. Using this potential our posterior measure for $(\xi,\theta|y)$ now reads
\begin{equation*}
\mu(d\xi,d\theta) = \frac{1}{Z}\exp(-\Phi_{\textrm{NC}}((\xi,\theta);y))\mu_0(d\xi,d\theta),
\end{equation*}
with
\begin{equation*}
Z: = \int_{H^{-s}(D) \times \mathbb{H}}\exp(-\Phi_{\textrm{NC}}((\xi,\theta);y))\mu_0(d\xi,d\theta).
\end{equation*}
As similarly done for the centred approach we can derive an approximation by an artificial dynamical system $\mu_{n+1,\textrm{NC}} = L_{n,\textrm{NC}} \mu_{n,\textrm{NC}}$ where
\begin{equation*}
\mu_{n+1}(d\xi,d\theta) = \frac{1}{Z_n} \exp(-h\Phi_{\textrm{NC}}((\xi,\theta);y))\mu_{n,\textrm{NC}}(d\xi, d\theta),
\end{equation*}
and
\begin{equation*}
Z_{n}: = \int_{H^{-s}(D) \times \mathbb{H}}\exp(-h\Phi_{\textrm{NC}}((\xi,\theta);y))\mu_{n}(d\xi,d\theta).
\end{equation*}
The prediction step of the non-centred approach is a mirror to that of the centred approach but with the difference of updating $\xi$ instead of $u$, and we evaluate both$(\xi,\theta)$ in the forward evaluation. By defining $\mathcal{G}^{T} = \mathcal{G} \circ T$  our update equations for the non-centred approach are
\begin{align*}
\xi^{(j)}_{n+1} = \xi^{(j)}_{n} + C^{\xi p}_n (C^{pp}_n + h^{-1} \Gamma)^{-1}(y^{(j)}_{n+1} - \mathcal{G}^T(\xi^{(j)}_n,\theta^{(j)}_n)) \\
\theta^{(j)}_{n+1} = \theta^{(j)}_{n} + C^{\theta p}_n (C^{pp}_n + h^{-1} \Gamma)^{-1}(y^{(j)}_{n+1} - \mathcal{G}^T(\xi^{(j)}_n,\theta^{(j)}_n)),
\end{align*}
where we again assume that $\iota_{n+1} \sim \mathcal{N}(0,h^{-1}\Gamma)$ such that $y^{(j)}_{n+1} = y + \iota_{n+1}$, and that our empirical covariances are defined as 
\begin{align*}
C^{\xi p}_{n} &= \frac{1}{J}\sum^{J}_{k=1} (\xi^{(k)} - \bar{\xi}) \otimes (\mathcal{G}^T(\xi^{(k)},\theta^{(k)}) - \overline{\mathcal{G}^T}),  \\
C^{\theta p}_{n} &= \frac{1}{J} \sum^{J}_{k=1} (\theta^{(k)} - \bar{\theta}) \otimes (\mathcal{G}^T(\xi^{(k)},\theta^{(k)}) - \overline{\mathcal{G}^T}),  \\
C^{pp}_{n} &= \frac{1}{J} \sum^{J}_{k=1}  (\mathcal{G}^T(\xi^{(k)},\theta^{(k)}) - \bar{\mathcal{G}})  \otimes  (\mathcal{G}^T(\xi^{(k)},\theta^{(k)}) - \overline{\mathcal{G}^T}).
\end{align*}
We see that with the covariances defined above we have the addition of the  hyperparameter included in the evaluation of the forward operator which coincides with the inverse problem formulation \eqref{eq:inv-nc} where
\begin{equation*}
\overline{\mathcal{G}^T} = \frac{1}{J} \sum^{J}_{j=1}\mathcal{G}^{T}(\xi^{(j)}_n,\theta^{(j)}_n), \ \ \ j = 1,\ldots,J.
\end{equation*}
Therefore by taking the limit of our update equations as $h \rightarrow 0$, we have the coupled SDEs
\begin{align}
\label{eq:u1-nc}
\frac{d\xi^{(j)}}{dt} = C^{\xi p}(\cdot)\Gamma^{-1}\big(y - \mathcal{G}^T(\xi^{(j)},\theta^{(j)})\big) + C^{\xi p}(\cdot)\sqrt{\Gamma^{-1}}\frac{dW^{(j)}}{dt} \\
\label{eq:u2-nc}
\frac{d\theta^{(j)}}{dt} = C^{\theta p}(\cdot)\Gamma^{-1}\big(y - \mathcal{G}^T(\xi^{(j)},\theta^{(j)})\big)+ C^{\theta p}(\cdot)\sqrt{\Gamma^{-1}}\frac{dW^{(j)}}{dt},
\end{align}
such that $W^{(j)}$ are cylindrical Brownian motions. Using the formula for the covariances from \eqref{eq:u1-nc} and \eqref{eq:u2-nc}
\begin{align*}
\frac{du^{(j)}}{dt} &= \frac{1}{J}\sum^{J}_{k=1} \Big\langle  \mathcal{G}^T(\xi^{(k)},\theta^{(k)})- \bar{\mathcal{G}},y-  \mathcal{G}^T(\xi^{(j)},\theta^{(j)}) + \sqrt{\Gamma}\frac{dW^{(j)}}{dt} \Big\rangle_{\Gamma}(u^{(k)} - \bar{u}) \\
\frac{d\theta^{(j)}}{dt} &= \frac{1}{J}\sum^{J}_{k=1} \Big\langle  \mathcal{G}^T(\xi^{(j)},\theta^{(k)}) - \bar{\mathcal{G}},y-  \mathcal{G}^T(\xi^{(j)},\theta^{(j)}) + \sqrt{\Gamma}\frac{dW^{(j)}}{dt} \Big\rangle_{\Gamma}(\theta^{(k)} - \bar{\theta}).
\end{align*}
\subsection{Linear noise-free case}
As before we work in a linear setting where we define $\mathcal{G}^T(\cdot) = A\cdot$. Substituting $\mathcal{G}^T(\xi^{(k)},\theta^{(k)})=Au^{(k)}$, for $k=1,\ldots,J$, yields
\begin{align}
\label{eq:lim-nc-1}
\frac{d\xi^{(j)}}{dt} &= \frac{1}{J}\sum^{J}_{k=1} \Big\langle A(u^{(k)}- \bar{u}),y- Au^{(k)} + \sqrt{\Gamma}\frac{dW^{(j)}}{dt} \Big\rangle_{\Gamma} (\xi^{(k)} - \bar{\xi}) \\
\label{eq:lim-nc-2}
\frac{d\theta^{(j)}}{dt} &= \frac{1}{J}\sum^{J}_{k=1} \Big\langle A(u^{(k)} - \bar{u}),y- Au^{(k)}+ \sqrt{\Gamma}\frac{dW^{(j)}}{dt} \Big\rangle_{\Gamma} (\theta^{(k)} - \bar{\theta}) .
\end{align}
We notice with the SDEs the inclusion of the hyperparameter $\theta$  highlights one of the differences for the non-centred approach. Given our covariance operators for the non-centred approach
\begin{align*}
C(\xi) &= \frac{1}{J}\sum^{J}_{k=1}(\xi^{(k)} - \bar{\xi}) \otimes (\xi^{(k)} - \bar{\xi}) \\
C(\theta) &= \frac{1}{J}\sum^{J}_{k=1}(\theta^{(k)} - \bar{\theta}) \otimes (\theta^{(k)} - \bar{\theta}),	
\end{align*}
which we can express \eqref{eq:cent-u} and \eqref{eq:cent-l}, where $\Gamma = 0$, as
\begin{align} \label{eq:gf1}
\frac{d\xi^{(j)}}{dt} &= C(\xi) D_{u} \Phi_{\textrm{NC}}(u^{(j)};y) \\ \label{eq:gf2}
\frac{d\theta^{(j)}}{dt} &= C(\theta) D_{u} \Phi_{\textrm{NC}}(u^{(j)};y),
\end{align}
with potential
\begin{equation*}
\Phi_{\textrm{NC}}(\xi,\theta;y)= \frac{1}{2} \| \Gamma ^{-1/2}(y - Au)\|^2.
\end{equation*}
\\
For the non-centred approach we have derived a coupled gradient flow system for both the underlying unknown \eqref{eq:gf1} and the hyperparameters \eqref{eq:gf2} that differs from its centred counterpart. This is for the linear noisy case with continuous-time limits \eqref{eq:lim-nc-1} and \eqref{eq:lim-nc-2}.

\subsection{Hierarchical covariance inflation}
With the developments of the EnKF there has been considerate advancements which have looked at alternative approaches that provide improvements. An issue that can arise with the EnKF is rank deficiency. This problem occurs from the empirical covariances when the number of ensemble particles $J$ in the data space $\mathcal{Y}$ is less than that of the input space $\mathcal{X}$. One way to counteract this issue is through the technique of covariance inflation \cite{GE09}. We now aim to derive continuous-time limits of hierarchical covariance inflation, for EnkF inversion. We will do so specifically for the non-centred case, given its advantages we have discussed and shown in \cite{CIRS17}. This allows for a modification of  our covariances $C(\xi), C(\theta)$ given by
\begin{align}
\label{eq:vi_c}
C(\xi) \rightarrow \gamma C_0  + C(\xi) \\
\label{eq:vi_c2}
C(\theta) \rightarrow \gamma \theta_0  + C(\theta),
\end{align}
with $\gamma \in \mathbb{R}^{+}$. Substituting \eqref{eq:vi_c} and \eqref{eq:vi_c2}  in our gradient flow system leads to, for $j=1,\ldots,J$,
\begin{align*}
\frac{d\xi^{(j)}}{dt} &= (\gamma C_0  + C(\xi)) D_{u} \Phi_{\textrm{NC}}(u^{(j)};y) \\
\frac{d\theta^{(j)}}{dt} &= (\gamma \ell_0  + C(\theta))  D_{u} \Phi_{\textrm{NC}}(u^{(j)};y).
\end{align*}
By taking the inner product with $D_\xi \Phi_{\textrm{NC}}(u^{(j)};y)$ we have \\
\begin{align*}
\frac{d\Phi_{\textrm{NC}}(u^{(j)};y)}{dt} \leq - \gamma \| C^{1/2}_0 D_{u} \Phi_{\textrm{NC}}(u^{(j)};y)\|^2 \\
\frac{d\Phi_{\textrm{NC}}(u^{(j)};y)}{dt} \leq - \gamma \| \theta^{1/2}_0 D_{u} \Phi_{\textrm{NC}}(u^{(j)};y)\|^2.
\end{align*}
which indicates that all limits are contained in the critical points of both potentials.
\\
\subsection{Hierarchical localization} A further issue with the EnKF can arise from the correlation between the empirical covariances. If the correlation distance is long this can cause problems with updating our unknowns. {Localization} \cite{GE94} is a method that aids by cutting off these long distances which helps improve the update of the estimate. It is usually achieved through the aid of convolution kernels that reduce distances of distant regions.
The convolution kernels $\rho:D \times D \rightarrow \mathbb{R}$ are usually of the form
\begin{equation*}
\rho(x,y) = \exp\big(-(x-y)^{T} \big), 
\end{equation*}
given $D \subset \mathbb{R}^d$ for $d \in \mathbb{N}$, thereby allowing us to define continuous-time limits 
\begin{align*}
\frac{d\xi^{(j)}}{dt} &= C^{\mathrm{loc}}(\xi) D_{u} \Phi_{\textrm{NC}}(u^{(j)};y) \\
\frac{d\theta^{(j)}}{dt} &= C^{\mathrm{loc}}(\theta) D_{u} \Phi_{\textrm{NC}}(u^{(j)};y),
\end{align*}
where 
\begin{align*}
C^{\mathrm{loc}}(\xi) \Phi(x) = \int_{D} \phi(y)k(x,y)\rho(x,y)dy \\
C^{\mathrm{loc}}(\theta) \Phi(x) = \int_{D} \phi(y)k(x,y)\rho(x,y)dy,
\end{align*} 
given that $k(x,y)$ corresponds to the kernel of the covariances and $\phi \in \mathcal{X}$.

\section{Numerical experiments}\label{section-5}
We now wish to add some numerics to the theory discussed regarding the variants of localization and covariance inflation. We have seen through numerical investigation in \cite{CIRS17} that the theory discussed here matches with the results attained for various non-linear and linear inverse problems. In the context of this work we will only test for linear inverse problems, specifically a 1D elliptic PDE. Our numerics will consist of learning rates of hyperparameters and the reconstruction of the truth for both hierarchical localization and covariance inflation. Given a domain $D \subset \mathbb{R}^d$, for $d=1$, with Lipschitz boundary $\partial D$, our forward model is concerned with solving for $p \in H^1_0(D)$ from 
\begin{align}
\label{eq:fw}
\frac{d^2p}{dx^2} + p &= f \ \ \ x \in D, \\
\label{eq:bc}
p &= 0  \ \ \ x \in \partial D.
\end{align}
Here we assume a domain of $D = (0,\pi)$ with prescribed zero Dirichlet boundary conditions \eqref{eq:bc}. The inverse problem associated with the forward problem \eqref{eq:fw} is the recovery of noisy measurements from the right hand side $f$ where
\begin{equation}
\label{eq:inv_num1}
y_{j} = l_j(p) + \eta_j,
\end{equation}
such that $l_j \in V^*$ where $V^*$ is the dual space of $H^1_0(D)$. By defining $\mathcal{G}_j(T(\xi,\theta)) = l_j(p)$, where we take our unknown function $T(\xi,\theta)=f$, we can rewrite \eqref{eq:inv_num1}  as 
\begin{equation}
y = \mathcal{G}(T(\xi,\theta)) + \eta.
\end{equation}
Our inverse solver for our numerics will be the iterative ensemble Kalman method \cite{ILS13}, where we aim to reconstruct a Gaussian random field. As discussed in Section \ref{section-2} the algorithm can be split into two parts; the {prediction} step and the {update step}. Initially we set our initial ensemble based on a prior distribution. Our initial field will be set such that $\xi^{(j)}_0 \sim \mathcal{N}(0,\mathcal{C}_{\theta})$ where $\mathcal{C_{\theta}}$ takes the form \eqref{eq:easy}. To generate our initial ensemble with covariance structure of \eqref{eq:fn} we first discretize our SPDE \eqref{eq:SPDE} for $u$ using a 1D centred finite difference method
\begin{equation*}
u_i - \ell^2 \frac{u_{i+1} - 2u_i + u_{i-1}}{h^2} = \xi_i, \quad \xi_i \sim \mathcal{N}(0,\alpha \ell /h),
\end{equation*}
which in matrix form is given as
\[
  \begin{pmatrix}
    1+2\frac{\ell^2}{h^2} & -\frac{\ell^2}{h^2} & 0 & \hdots  & 0 \\
    -\frac{\ell^2}{h^2} &  1+2\frac{\ell^2}{h^2} & -\frac{\ell^2}{h^2} & \ddots & \vdots \\
      0 & -\frac{\ell^2}{h^2} & \ddots  & \ddots & 0  \\
     \vdots & \ddots & \ddots & \ddots & -\frac{\ell^2}{h^2} \\
    0 & \hdots & 0 & -\frac{\ell^2}{h^2} & 1+2\frac{\ell^2}{h^2} 
  \end{pmatrix}
  \begin{pmatrix}
  x_1 \\
  x_2 \\
  \vdots \\
  x_I
  \end{pmatrix}
  =
  \begin{pmatrix}
  \xi_1 \\
  \xi_2 \\
  \vdots \\
  \xi_I
  \end{pmatrix}.
\]
After generating $u$ we take our linear mapping $T:\mathcal{X} \rightarrow \mathcal{X}$ to generate samples of $\xi$. Our mesh size for our discretization is given as $h=1/50$ where $I=50$. From $\theta$ we will only treat the parameter of the length scale $\ell$ hierarchically. Our reason for this is that in a 1D numerical example the lengthscale has a more notable effect on how the input is generated. We keep $\sigma = 1$ and $\alpha=0.8$ while setting a prior on the lengthscale
\begin{equation}
\label{eq:ell}
\ell \sim \mathcal{U}[10,40].
\end{equation}
We generate our prior form $\mathbb{P}(\xi,\theta)$ by solving the SPDE \eqref{eq:SPDE} using a piecewise linear finite element method. Our truths will be chosen such that $\xi^{\dagger} \sim \mathcal{N}(0,\mathcal{C}^{\dagger}_{\theta})$, similar to the initial ensemble, where $\theta^{\dagger} = (\sigma^{\dagger},\alpha^{\dagger},\ell^{\dagger}) = (1,0.8,37)$. For our iterative method we set an ensemble size of $J=50$ and an iteration count of $n=15$, with covariance noise $\Gamma = 0.01^2I$. We discretize our PDE model \eqref{eq:inv_num1} with a different mesh size of $h^* = 1/50$ using a centred finite difference method. We make inference of our unknown through 16 chosen observations which lie on the true value of the unknown. For implementing covariance inflation we set the parameter as $\gamma = 0.1$.

In Figure \ref{fig:first} we analyze the performance of hierarchical localization by comparing it with non-hierarchical localization and the standard EnKF. We see that in the left subfigure
the standard EnKF and localization perform similarly emulating a smooth function. However for hierarchical localization we see an improved reconstruction which is more closely related to the truth, which incorporates its sharper features. This can be attributed to changes in the length scale which are verified in the right sub figure, where we see that by adopting a hierarchical approach we can effectively learn the true value of the length scale which is $\ell^{\dagger} = 37$. The learning of the lengthscale remains consistent with the results of \cite{CIRS17} where the hyperparameters learn the true value quickly and reach a limit before the learning stops prior to the termination of the experiment. We see similar results when analyzing hierarchical covariance inflation, where learning the length scale improves on the overall reconstruction of the truth as shown in Figure \ref{fig:second}.

\begin{figure}[h!]
\centering
\includegraphics[width=\linewidth]{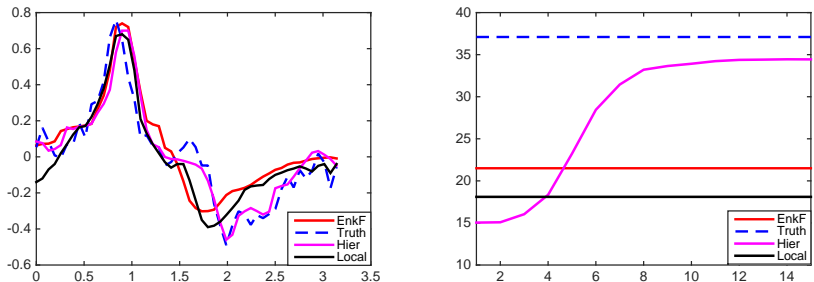}
 \caption{Performance of hierarchical localization. Left: reconstruction of the truth. Right: learning rate of the lengthscale.}
 \label{fig:first}
\end{figure}

\begin{figure}[h!]
\centering
\includegraphics[width=\linewidth]{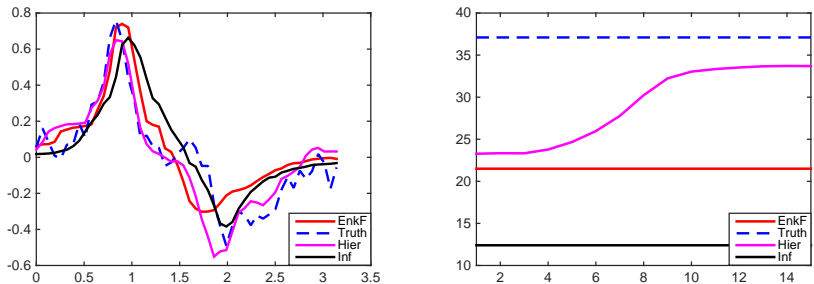}
 \caption{Performance of hierarchical covariance inflation. Left: reconstruction of the truth. Right: learning rate of the lengthscale.}
 \label{fig:second}
\end{figure}

\section{Conclusion} \label{section-6}
The objective of this work was to introduce analysis regarding the recent hierarchical approaches that were applied to EKI \cite{CIRS17}. We have given a detailed description and comparison of both the centred and non-centred approaches. For each case we have shown how they relate to the subspace property where we further derived continuous-time limits in in both the noisy and noise-free case.  Our analysis clarifies that by taking a non-centred approach one can significantly improve the performance of EKI. This is verified through the transformation which allows the ensemble of particles to leave the span of the initial ensemble. We introduced certain variants of the EnKF to show that hierarchically this can be achieved too, which was verified through a numerical experiment.

 One avenue of interest is to consider, as done in \cite{SS17}, the behaviour of the gradient flow structure defined for the non-centred approach \eqref{eq:gf1} and analyze the relationship with the subspace property. This is beyond the scope of this paper, but analyzing the behaviour could potentially result in improved convergence results over the non-hierarchical case.  A further direction is to extend this work by using certain SDE discretizations of EKI. This was analyzed in \cite{BSW17}, the natural extension of this would be to translate this in a hierarchical manner. Finally we provide a final remark that all of the analysis done thus far has been primarily in the linear case. This extends to both the EnKF in general and to EKI. The possibility of understanding limiting analysis in the non-linear case would provide much insight into the behaviour of the EnKF.
 \\\\
 \textbf{Acknowledgments} \ The author is grateful to Daniel Sanz-Alonso, Claudia Schillings and Andrew Stuart for fruitful and helpful discussions. The author was partially supported by the EPSRC MASDOC Graduate Training Program and by Premier Oil.


\begin{thebibliography}{}
          







\bibitem{JLA02} 
 J.  L. Anderson.
\newblock{ An ensemble adjustment Kalman filter for data assimilation,} {\em Monthly Weather Review} \textbf{129} (2002), 28-84.


\bibitem{AA99}
J. L.  Anderson and S. L. Anderson.
\newblock{A Monte Carlo implementation of the nonlinear filtering problem to produce ensemble assimilations and forecasts,} {\em Monthly Weather Review},  \textbf{127} (1999), 2741-2758.

\bibitem{ABPS14}
S. Agapiou, J. M. Bardsley, O. Papaspiliopoulos and A. M. Stuart.
\newblock{ Analysis of the Gibbs sampler for hierarchical inverse problems,} {\em SIAM Journal on Uncertainty Quantification}, \textbf{1} (2014), 511-544.



\bibitem{APSS17}
S. Agapiou, O. Papaspiliopoulos, D. Sanz-Alonso and A. M. Stuart.
\newblock{ Importance sampling: computational complexity and intrinsic dimension,}, {\em To appear in Statistical Science}, (2017).




\bibitem{BSW17}
D. Blomker, C. Schillings and P. Wacker,
\newblock{\em A strongly convergent numerical scheme from EnKF continuum analysis,} preprint (2017), \url{http://arxiv.org/abs/1703.06767}.



\bibitem{CIRS17}
N. K. Chada, M. A. Iglesias, L. Roininen and A. M. Stuart. 
\newblock {\em Parameterizations for ensemble Kalman inversion}, preprint (2017), \url{http://arxiv.org/abs/1709.01781}.




\bibitem{DIS16} 
M. M. Dunlop, M. A. Iglesias and A. M. Stuart.
\newblock{Hierarchical Bayesian Level Set Inversion, }{\em Statistics and Computing}, (2016).


\bibitem{GE09}
G. Evensen.
\newblock {\em Data Assimilation: The Ensemble Kalman Filter}. {{Springer}}, (2009).


\bibitem{GE94}
G. Evensen
\newblock{Sequential data assimilation with a nonlinear quasi-geostrophic model using Monte Carlo methods to forecast error statistics}, {\em Journal of Geophysical Research: Oceans},  \textbf{99} (1994), 10143-10162.


\bibitem{EL96}
G. Evensen and P. J. Van Leeuwen.
\newblock{ Assimilation of geosataltimeter data for the agulhas currentusing the ensemble Kalman filter with a quasi-geostrophic model}, {\em Monthly Weather Review}, \textbf{128} (1996), 85-86.


\bibitem{GMT11}
 F. Le Gland, V. Monbet and V. D. Tran.
 \newblock{Large sample asymptotics for the ensemble Kalman filter}, {\em The Oxford Handbook of Nonlinear Filtering}, Oxford University Press (2011), 598-631.

 

\bibitem{HM04}
T. M. Hamill and J. S. Whitaker.
\newblock{Accounting for error due to unresolved scales in ensemble data assimilation: a comparison of different approaches}, {\em Monthly Weather Review},  \textbf{133} (2004), 3132-3147.

\bibitem{MH97}
M. Hanke.
\newblock { A regularizing Levenberg-Marquardt scheme, with applications to inverse groundwater filtration problems,} {\em Inverse Problems,} \textbf{13} (1997), 79-95.


\bibitem{MAI16}
M. A. Iglesias.
\newblock {A regularising iterative ensemble Kalman method for PDE-constrained inverse problems,} {\em Inverse Problems}, \textbf{32} (2016). 


\bibitem{ILS13}
M. A. Iglesias, K. J. H. Law and A. M. Stuart.
\newblock { Ensemble Kalman methods for inverse problems}. {\em Inverse Problems}, \textbf{29} (2014).



\bibitem{LR09}
G. Li and A. C. Reynolds. 
\newblock {Iterative ensemble Kalman filters for data assimilation,}{ \em SPE J} Vol. \textbf{14} (2009), 496-505.


\bibitem{LRL11}
F. Lindgren, H. Rue and J. Lindstr\"om.
\newblock{An explicit link between Gaussian fields and Gaussian Markov random fields: the stochastic partial differential equation approach}, Volume \textbf{73} (2011), 423-498.


\bibitem{LO05}
 N. Liu and D. S. Oliver.
 \newblock{Ensemble Kalman filter for automatic history matching of geologic facies}, {\em Journal of Petroleum Science and Engineering}, \textbf{47} (2005), 147-161.


\bibitem{LLM17}
 W. Liu, J. Li and Y. M. Marzouk.
 \newblock{ An approximate empirical Bayesian method for large-scale linear-Gaussian inverse problems}, preprint (2017), \url{http://arxiv.org/abs/1705.07646}.
 

\bibitem{MCB11}
J. Mandel, L. Cobb, and J. D. Beezley.
\newblock{On the convergence of the ensemble Kalman filter}, {\em Applications of Mathematics}, \textbf{56} (2011), 533-541.


\bibitem{PRS03}
O. Papaspiliopoulos, G. O. Roberts, and M. Sk\"{o}ld.
\newblock{ Non-centered parameterisations for hierarchical models and data augmentation}, {\em Bayesian statistics}, \textbf{7} (2003): Proceedings  of  the  Seventh Valencia International Meeting. Oxford University Press, USA.


\bibitem{PRS07}
O. Papaspiliopoulos, G. O. Roberts, and M. Sk\"{o}ld.
\newblock{A general framework for the parametrisation of hierarchical models}, {\em Statistical Science}, \textbf{22} (2007), 59-73.




\bibitem{RGLM16}
L. Roininen, M. Girolami, S. Lasanen and M. Markkanen.
\newblock{Hyperpriors for Mat\'{e}rn fields with applications in Bayesian inversion,} preprint (2016), \url{http://arxiv.org/abs/1612.02989}.


\bibitem{RHL14}
L. Roininen, J. M. J. Huttunen and S. Lasanen.
\newblock{Whittle-Mat\'{e}rn priors for Bayesian statistical inversion with applications in electrical impedance tomography,} {\em Inverse problems and Imaging}, \textbf{8} (2014).



\bibitem{SS17}
C. Schillings and A. M. Stuart.
\newblock{Analysis of the ensemble Kalman filter for inverse problems}, {\em SIAM Numerical Analysis (accepted)} (2017).


 
\bibitem{AMS10}
A. M. Stuart.
\newblock {Inverse problems: A Bayesian perspective}. {\em Acta Numerica}, Vol {\textbf{19}} (2010), 451-559.


\end{thebibliography}
          \end{document}